\begin{document}
\title[
Asymptotic Behavior of neural fields ] {Asymptotic Behavior of
neural fields in an unbounded domain}

\author[S. H. da Silva 
]
{Severino Hor\'acio da Silva}
\author[M. B. Silva 
]
{Michel Barros Silva}

\address{Severino Hor\'acio da Silva$^1$ \newline
Unidade Acad\^emica de Matem\'atica e Estat\'istica UAME/CCT/UFCG\\
 Rua Apr\'igio Veloso, 882,  Bairro Universit\'ario CEP 58429-900,
\ Campina Grande-PB, Brasil} \email{horacio@dme.ufcg.edu.br}

\address{Michel Barros Silva$^2$ \newline
Unidade Acad\^emica de Matem\'atica e Estat\'istica UAME/CCT/UFCG\\
 Rua Apr\'igio Veloso, 882,  Bairro Universit\'ario CEP 58429-900,
 Campina Grande-PB, Brasil}
\email{michel@dme.ufcg.edu.br}
\thanks{$^1$Supported by  CAPES/CNPq-Brazil}
 \thanks{$^2$Supported by  CNPq-Brazil}

\subjclass[2000]{45J05, 45M05, 34D45} \keywords{Well-posedness;
global attractor; upper semicontinuity of attractors}

\begin{abstract}
 In this paper, we prove the existence of a compact global attractor for the flow generated by equation
 $$
 \frac{\partial u}{\partial t}(x,t)+u(x,t)= \int_{\mathbb{R}^{N}}J(x-y)(f( u(y,t))dy+
 h,   \quad h  > 0, \quad x\in \mathbb{R}^{N}, \quad t\in\mathbb{R}_{+}
 $$
 in the weight space  $L^{p}(\mathbb{R}^{N}, \rho)$. We also give uniform estimates on the size of the attractor and we exhibit a  Lyapunov functional to the flow generated by this equation.
\end{abstract}

\maketitle
\numberwithin{equation}{section}
\newtheorem{theorem}{Theorem}[section]
\newtheorem{lemma}[theorem]{Lemma}
\newtheorem{proposition}[theorem]{Proposition}
\newtheorem{remark}[theorem]{Remark}

\section{Introduction}

In this work we consider the non local evolution equation
\begin{equation}
\frac{\partial u}{\partial t}(x,t)=-u(x,t)+ J*(f\circ u)(x,t)+ h,
\quad h
> 0, \label{1.1}
\end{equation}
where $u(x,t)$ is a real-valued function on $\mathbb{R}^{N}\times
\mathbb{R}_{+}$, $h$ is a positive constant,
$J \in C^{1}(\mathbb{R}^{N})$ is a non negative even function supported in the
ball of center at the origin and radius $1$, and, $f$ is a non negative nondecreasing
function. The $*$ above denotes convolution product in $\mathbb{R}^{N}$, namely:
\begin{equation}
(J*u)(x)=\int_{\mathbb{R}^{N}}J(x-y)u(y)dy \label{1.2}.
\end{equation}

The function $u(x,t)$ denotes the mean membrane potential of a patch
of tissue located at position $x\in \mathbb{R}^{N}$ at time $t\geq
0$. The connection function $J(x)$ determines the coupling between
the elements at position $x$ and position $y$. The non negative
nondecreasing function $f(u)$ gives the neural firing rate, or
averages rate at which spikes are generated, corresponding to an
activity level $u$. The neurons at a point $x$ are said to be active
if $f(u(x,t))>0$. The parameter $h$ denotes a constant external
stimulus applied uniformly to the entire neural field.

For the particular case, where $N=1$, there are already in the
literature several works dedicated to the analysis of this model,
(see, for example, \cite{Amari}, \cite{Chen}, \cite{Ermentrout},
\cite{Kishimoto}, \cite{Krisner}, \cite{Laing}, \cite{Rubin},
\cite{Silva}, \cite{Silva2}, \cite{Silva3}, \cite{Silva4},
\cite{Silva5}, \cite{Wilson} and \cite{Wu}). Also there are some
works for this model with $N> 1$, (see for example \cite{French} and
\cite{Laing}).

In this paper we extend, for $L^{p}(\mathbb{R}^{N}, \rho)$, $N\geq
1$ and $1<p<\infty$, the results (on global attractors) obtained in
\cite{Silva2} in the phase space $L^{p}(\mathbb{R}, \rho)$.
Furthermore, we exhibit a Lyapunov functional to the flow generated
by (\ref{1.1}).

This paper is organized as follows. In Section 2 we prove that, in
the phase space $L^{p}(\mathbb{R}^{N},\rho)=\{u\in L^{1}_{\rm
loc}(\mathbb{R}^{N}) : \int |u|^{p}\rho(x)dx<\infty\}$, the Cauchy
problem for \eqref{1.1} is well posed with globally defined
solutions. In Section 3 we prove that the system is dissipative in
the sense of \cite{Hale},  that is, it has a global compact
attractor, generalizing Theorem 3.5 of \cite{Silva2}. In our proof,
we use the Sobolev's compact embedding
$W^{1,p}(B[0,l])\hookrightarrow L^{p}(B[0,l])$ and the same
techniques used in \cite{Pereira} and \cite{Silva2} (see also
\cite{Bezerra}, \cite{Severino} and \cite{Severino2} for related
work). In Section 4, we prove some estimates for the attractor and
finally, in Section 5, using ideas from \cite{Giese}, \cite{Kubota}
and \cite{Silva4}, we exhibit a Lyapunov function for the flow
genereted by \eqref{1.1}.

\section{Well-posedness}

In this section we consider the flow generated by \eqref{1.1} in the
space $L^{p}(\mathbb{R}^{N},\rho)$ defined by
\begin{align*}
L^{p}(\mathbb{R}^{N},\rho)=\big\{u\in L^{1}_{\rm loc}(\mathbb{R}^{N}) :
\int_{\mathbb{R}^{N}}|u(x)|^{p}\rho(x)dx <+\infty\big\},
\end{align*}
with norm
$\|u\|_{L^{p}(\mathbb{R}^{N},\rho)}=\left(\int_{\mathbb{R}^{N}}|u(x)|^{p}\rho(x)dx
\right)^{1/p}$. Note that, in this space, the constant function equal to 1 has norm
1.

As similarly assumed in \cite{Silva2}, we assume here the
following hypotheses on the functions $f$ and $\rho$:

\begin{itemize}
\item[(H1)] the function $f:\mathbb{R}\to \mathbb{R}$ is globally Lipschitz,
that is, there exists $k_{1}>0$ such that
\begin{equation}
|f(x)-f(y)|\leq k_{1}|x-y|, \quad  \forall \, x,y \in
\mathbb{R},\label{1.4}
\end{equation}
\item[(H2)] $\rho: \mathbb{R}^{N} \rightarrow \mathbb{R}$ is an integrable positive even function
 with $\int_{\mathbb{R}^{N}}\rho(x)dx=1$ and there exists constant $K>0$ such that
$$
\sup\{\rho(x) :x\in \mathbb{R}^{N},\; |x-y | \leq1\}\leq K\rho(y),
\quad \forall \,y\in \mathbb{R}^{N}.
$$

\end{itemize}

The corresponding higher-order Sobolev space
$W^{k,p}(\mathbb{R}^{N},\rho)$ is the space of functions $u\in
L^{p}(\mathbb{R}^{N},\rho)$ whose distributional derivatives up to
order $k$ are also in $L^{p}(\mathbb{R}^{N}, \rho)$, with norm
$$
\|u\|_{W^{k,p}(\mathbb{R}^{N},\rho)}=\Big(\sum_{i=1}^{k}\left\|\frac{\partial^{i}
u}{\partial
x^{i}}\right\|_{L^{p}(\mathbb{R}^{N},\rho)}^{p}\Big)^{1/p}.
$$

\begin{lemma} \label{lem2.1}
Suppose that {\rm  (H2)} holds. Then
$$
\|J*u\|_{L^{p}(\mathbb{R}^{N},\rho)}\leq K^{1/p}
\|J\|_{L^{1}}\|u\|_{L^{p}(\mathbb{R}^{N},\rho)}.
$$
\end{lemma}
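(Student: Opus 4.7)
The plan is to combine a Jensen / Hölder estimate on the convolution (using $J\geq 0$ as a measure of total mass $\|J\|_{L^1}$) with Fubini and the weight condition (H2), exploiting that $J$ is supported in the unit ball.

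First, I would estimate the pointwise $p$-th power of the convolution. Writing
\[
|(J*u)(x)|\leq \int_{\mathbb{R}^{N}} J(x-y)\,|u(y)|\,dy,
\]
and viewing $J(x-y)\,dy/\|J\|_{L^1}$ as a probability measure (or applying Hölder with conjugate exponents), one obtains
\[
|(J*u)(x)|^{p}\leq \|J\|_{L^{1}}^{\,p-1}\int_{\mathbb{R}^{N}} J(x-y)\,|u(y)|^{p}\,dy.
\]

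Next I would multiply by $\rho(x)$, integrate in $x$, and swap the order of integration by Fubini (legitimate since everything is non-negative). This produces
\[
\int_{\mathbb{R}^{N}}|(J*u)(x)|^{p}\rho(x)\,dx\;\leq\;\|J\|_{L^{1}}^{\,p-1}\int_{\mathbb{R}^{N}}|u(y)|^{p}\Bigl(\int_{\mathbb{R}^{N}} J(x-y)\rho(x)\,dx\Bigr)dy.
\]
The main step is then to bound the inner integral. Because $J$ is supported in the unit ball, the inner integral is taken over $\{x:|x-y|\leq 1\}$, and hypothesis (H2) gives $\rho(x)\leq K\rho(y)$ there, so the inner integral is at most $K\rho(y)\|J\|_{L^{1}}$.

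Plugging this back in yields $\|J*u\|_{L^{p}(\mathbb{R}^{N},\rho)}^{p}\leq K\|J\|_{L^{1}}^{p}\|u\|_{L^{p}(\mathbb{R}^{N},\rho)}^{p}$, and taking $p$-th roots gives the claim. There is no real obstacle; the only subtlety is recognising that the Jensen-type step (with the probability measure $J/\|J\|_{L^1}$) is what produces the correct power $\|J\|_{L^1}^{p-1}$, and that the support of $J$ is exactly what allows hypothesis (H2) to be applied to swap $\rho(x)$ for $K\rho(y)$.
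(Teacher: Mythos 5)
Your proposal is correct and follows essentially the same route as the paper: the Jensen-type bound $|(J*u)(x)|^{p}\leq \|J\|_{L^{1}}^{p-1}\int J(x-y)|u(y)|^{p}dy$ is exactly what the paper obtains by writing $J=J^{(p-1)/p}J^{1/p}$ and applying H\"older, and the subsequent Fubini step together with (H2) on the unit ball $B[y,1]$ matches the paper's conclusion verbatim.
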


\begin{proof}
 Since $J$ is bounded and compact supported, $(J*u)(x)$ is well
defined for $u\in L^{1}_{\rm loc}(\mathbb{R}^{N})$. Thus,
\begin{align*}
\|J*u\|_{L^{p}(\mathbb{R}^{N},\rho)}^{p}&= \int_{\mathbb{R}^{N}}|(J*u)(x)|^{p}\rho(x)dx
\\
&= \int_{\mathbb{R}^{N}}\left| \int_{\mathbb{R}^{N}} J(x-y)u(y)dy\right|^p\rho(x)dx
\\
&\leq \int_{\mathbb{R}^{N}}\left(\int_{\mathbb{R}^{N}} |J(x-y)||u(y)|dy\right)^p\rho(x)dx.
\end{align*}
Using
\begin{equation}
\frac{p-1}{p} + \frac{1}{p} = 1,  \label{conjugado}
\end{equation}
we obtain
\begin{equation*}
||J*u\|_{L^{p}(\mathbb{R}^{N},\rho)}^{p} \leq \int_{\mathbb{R}^{N}}\left(\int_{\mathbb{R}^{N}}|J(x-y)|^{(p-1)/p}|J(x-y)|^{1/p}|u(y)|dy\right)^p\rho(x)dx.
\end{equation*}
By Holder's inequality (see \cite{Brezis}), we have
\begin{eqnarray*}
\lefteqn{\|J*u\|_{L^{p}(\mathbb{R}^{N},\rho)}^{p}\leq}
\\
&\leq& \int_{\mathbb{R}^{N}}\left(\left(\int_{\mathbb{R}^{N}}|J(x-y)|dy\right)^{(p-1)/p}\left(\int_{\mathbb{R}^{N}}|J(x-y)||u(y)|^pdy\right)^{1/p}\right)^p\rho(x)dx
\\
&=& \int_{\mathbb{R}^{N}}||J||_{L^1}^{p-1}\left(\int_{\mathbb{R}^{N}}|J(x-y)||u(y)|^pdy\right)\rho(x)dx
\\
&=& ||J||_{L^1}^{p-1}\int_{\mathbb{R}^{N}}\int_{\mathbb{R}^{N}}|J(x-y)||u(y)|^pdy\ \rho(x)dx.
\end{eqnarray*}
Denoting the closed ball of center $ y $ and radius $1$ by $B[y,1]$
and using \eqref{1.2} and (H2), follows that
\begin{align*}
\|J*u\|_{L^{p}(\mathbb{R}^{N},\rho)}^{p}&\leq
\|J\|_{L^{1}}^{p-1}\int_{\mathbb{R}^{N}}\Big(\int_{B[y,1]}J(x)\rho(x)dx\Big)|u(y)|^{p}dy
\\
&\leq \|J\|_{L^{1}}^{p-1}\int_{\mathbb{R}^{N}}\Big(K\rho(y)\int_{B[y,1]}J(x)dx\Big)|u(y)|^{p}dy
\\
&\leq K\|J\|_{L^{1}}^{p}\int_{\mathbb{R}^{N}}|u(y)|^{p}\rho(y)dy
\\
&= K\|J\|_{L^{1}}^{p}\|u\|^{p}_{L^{p}(\mathbb{R}^{N},\rho)}.
\end{align*}
It conclude the result.
\end{proof}

\begin{proposition} \label{prop5.2}
Suppose that the hypotheses {\rm (H1)} and {\rm (H2)} hold. Then the function
$$
F(u)=-u+J*(f \circ u) +h
$$
is globally Lipschitz in $L^{p}(\mathbb{R}^{N}, \rho)$.
\end{proposition}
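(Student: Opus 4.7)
The plan is to estimate $\|F(u)-F(v)\|_{L^p(\mathbb{R}^N,\rho)}$ directly by isolating the linear and the nonlinear contributions. Writing
$$
F(u)-F(v) = -(u-v) + J*\bigl((f\circ u)-(f\circ v)\bigr),
$$
the constant term $h$ cancels and the triangle inequality reduces the problem to two estimates. The first piece contributes exactly $\|u-v\|_{L^p(\mathbb{R}^N,\rho)}$, so no work is needed there.

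For the convolution piece, I would apply Lemma \ref{lem2.1} with $u$ replaced by $f\circ u - f\circ v$ to obtain
$$
\bigl\|J*\bigl((f\circ u)-(f\circ v)\bigr)\bigr\|_{L^p(\mathbb{R}^N,\rho)} \le K^{1/p}\|J\|_{L^1}\,\bigl\|(f\circ u)-(f\circ v)\bigr\|_{L^p(\mathbb{R}^N,\rho)}.
$$
Then I would invoke the global Lipschitz hypothesis (H1) pointwise: since $|f(u(x))-f(v(x))|^p \le k_1^{\,p}\,|u(x)-v(x)|^p$, integrating against $\rho$ yields
$$
\bigl\|(f\circ u)-(f\circ v)\bigr\|_{L^p(\mathbb{R}^N,\rho)} \le k_1\,\|u-v\|_{L^p(\mathbb{R}^N,\rho)}.
$$
Combining everything, $F$ is Lipschitz with constant $1+k_1 K^{1/p}\|J\|_{L^1}$.

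The only genuine point requiring care, rather than a real obstacle, is to check that $F$ actually maps $L^p(\mathbb{R}^N,\rho)$ into itself, so that the Lipschitz estimate makes sense. Here one uses that (H1) gives the growth bound $|f(u(x))|\le |f(0)|+k_1|u(x)|$, together with the fact that constants belong to $L^p(\mathbb{R}^N,\rho)$ (since $\int \rho =1$, so the constant function $1$ has norm $1$ in this weighted space). Hence $f\circ u \in L^p(\mathbb{R}^N,\rho)$, and then Lemma \ref{lem2.1} applied to $f\circ u$ itself shows $J*(f\circ u)\in L^p(\mathbb{R}^N,\rho)$, so $F(u)\in L^p(\mathbb{R}^N,\rho)$ as required. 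The Lipschitz estimate above then completes the proof.
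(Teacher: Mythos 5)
Your proposal is correct and follows essentially the same route as the paper: split off the linear term by the triangle inequality, apply Lemma \ref{lem2.1} to $J*((f\circ u)-(f\circ v))$, and use (H1) pointwise to get $\|(f\circ u)-(f\circ v)\|_{L^{p}(\mathbb{R}^{N},\rho)}\leq k_{1}\|u-v\|_{L^{p}(\mathbb{R}^{N},\rho)}$, arriving at the same Lipschitz constant $1+k_{1}K^{1/p}\|J\|_{L^{1}}$. Your extra check that $F$ maps $L^{p}(\mathbb{R}^{N},\rho)$ into itself is a sensible addition the paper leaves implicit, but it does not change the argument.
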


\begin{proof}
  From triangle inequality and Lemma \ref{lem2.1}, it follows that
\begin{align*}
\|F(u)-F(v)\|_{L^{p}(\mathbb{R}^{N}, \rho)}&\leq \|v-u\|_{L^{p}(\mathbb{R}^{N}, \rho)}+\|J*(f\circ u)-J*(f\circ v)\|_{L^{p}(\mathbb{R}^{N}, \rho)}
\\
&\leq  \|v-u\|_{L^{p}(\mathbb{R}^{N},\rho)}+K^{1/p}\|J\|_{L^{1}}\|(f\circ u)-(f\circ v)\|_{L^{p}(\mathbb{R}^{N}, \rho)}.
\end{align*}
We have
\[
\|(f\circ u)-(f\circ v)\|_{L^{p}(\mathbb{R}^{N},\rho)}^{p}
\leq  \int_{\mathbb{R}^{N}}k_{1}^{p}|u(x)-v(x)|^{p}\rho(x)dx
= k_{1}^{p}\|u-v\|_{L^{p}(\mathbb{R}^{N}, \rho)}^{p}.
\]
Then
\[
\|F(u)-F(v)\|_{L^{p}(\mathbb{R}^{N},\rho)}\leq
(1+K^{1/p}\|J\|_{L^{1}}k_{1})\|u-v\|_{L^{p}(\mathbb{R}^{N}, \rho)}.
\]
Therefore, $F$ is globally Lipschitz in $L^{p}(\mathbb{R}^{N}, \rho)$.
\end{proof}

\begin{remark} \label{rmk2.5} \rm
From Proposition \ref{prop5.2} and standard results of ODEs in
Banach spaces (see  \cite{Daleckii}), follows that the Cauchy
problem for \eqref{1.1} is well posed in $L^{p}(\mathbb{R}^{N},
\rho)$ with globally defined solutions.
\end{remark}

\section{Existence of a global attractor}

In this section, we prove the existence of a global maximal
invariant compact set $\mathcal{A}\subset L^{p}(\mathbb{R}^{N},
\rho)$ for the flow of \eqref{1.1}, which attracts each bounded set
of $L^{p}(\mathbb{R}^{N}, \rho)$ (the global attractor, see
\cite{Hale} and \cite{Teman}), generalizing Theorem 3.3 in
\cite{Silva2}. For this, beyond (H1) and (H2) we assume the
following additional hypotheses:

\begin{itemize}

\item[(H3)] there exists $a>0$ such that $|f(x)|\leq a, \,\, \forall \,\, x\in \mathbb{R}$;\\

\item[(H4)] the non negative, symmetric bounded
function $J$ has bounded derivative with
\begin{eqnarray*}\label{norma-derivada-J}
\sup_{x\in \mathbb{R}^{N}}\int_{\mathbb{R}^{N}}\partial_{
x}J(x-y)dy\leq S\ \ \mbox{and}\ \ \sup_{y\in
\mathbb{R}^{N}}\int_{\mathbb{R}^{N}}\partial_{ x}J(x-y)dx\leq S,
\end{eqnarray*}
for some constant $0<S<\infty.$

\end{itemize}

From now on 
we 
denote by $S(t)$ the flow generated by \eqref{1.1}.\\

We recall that a set $\mathcal{B} \subset L^{p}(\mathbb{R}^{N}, \rho)$ is
an absorbing set for the flow $S(t)$ in $L^{p}(\mathbb{R}^{N}, \rho)$
if, for any bounded set $B \subset L^{p}(\mathbb{R}^{N}, \rho)$, there
is a $t_{1}>0$ such that $S(t)B \subset \mathcal{B}$ for any $t\geq
t_{1}$, (see \cite{Teman}).


\begin{lemma}
Suppose that the hypotheses (H1), (H2) and (H3) hold and let $R=aK^{1/p}\|J\|_{L^{1}}+h$. Then  the ball with center at the origin   and radius $R+\varepsilon$ is an absorbing set for the flow $S(t)$ in  $L^{p}(\mathbb{R}^N, \rho)$  for any $\varepsilon >0$.\label{lema3.1}
\end{lemma}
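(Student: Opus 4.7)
The plan is to use the variation of constants formula for \eqref{1.1} and then estimate each piece in the weighted norm, exploiting the bound on $f$ from (H3), the convolution estimate from Lemma \ref{lem2.1}, and the normalization $\int_{\mathbb{R}^N}\rho(x)dx=1$ of the weight.

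First I would rewrite \eqref{1.1} as $u_t+u=J*(f\circ u)+h$, so that for any initial condition $u_0\in L^{p}(\mathbb{R}^{N},\rho)$ the unique solution (provided by Remark \ref{rmk2.5}) satisfies
\begin{equation*}
u(t)=e^{-t}u_{0}+\int_{0}^{t}e^{-(t-s)}\bigl[J*(f\circ u(s))+h\bigr]\,ds.
\end{equation*}
Taking $L^{p}(\mathbb{R}^{N},\rho)$-norms and using the triangle inequality for the Bochner integral yields
\begin{equation*}
\|u(t)\|_{L^{p}(\mathbb{R}^{N},\rho)}\leq e^{-t}\|u_{0}\|_{L^{p}(\mathbb{R}^{N},\rho)}+\int_{0}^{t}e^{-(t-s)}\bigl(\|J*(f\circ u(s))\|_{L^{p}(\mathbb{R}^{N},\rho)}+\|h\|_{L^{p}(\mathbb{R}^{N},\rho)}\bigr)\,ds.
\end{equation*}

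Next I would bound the integrand by the constant $R$. By Lemma \ref{lem2.1} we have $\|J*(f\circ u(s))\|_{L^{p}(\mathbb{R}^{N},\rho)}\leq K^{1/p}\|J\|_{L^{1}}\|f\circ u(s)\|_{L^{p}(\mathbb{R}^{N},\rho)}$, while (H3) gives pointwise $|f(u(s,x))|\leq a$, so
\begin{equation*}
\|f\circ u(s)\|_{L^{p}(\mathbb{R}^{N},\rho)}^{p}\leq a^{p}\int_{\mathbb{R}^{N}}\rho(x)dx=a^{p},
\end{equation*}
thanks to (H2). Similarly, since the constant function equal to $1$ has norm $1$ in the weighted space, $\|h\|_{L^{p}(\mathbb{R}^{N},\rho)}=h$. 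Combining, the integrand is at most $aK^{1/p}\|J\|_{L^{1}}+h=R$.

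Evaluating $\int_{0}^{t}e^{-(t-s)}ds=1-e^{-t}$ then gives
\begin{equation*}
\|u(t)\|_{L^{p}(\mathbb{R}^{N},\rho)}\leq e^{-t}\|u_{0}\|_{L^{p}(\mathbb{R}^{N},\rho)}+R(1-e^{-t})=R+e^{-t}\bigl(\|u_{0}\|_{L^{p}(\mathbb{R}^{N},\rho)}-R\bigr).
\end{equation*}
Finally, given any bounded set $B\subset L^{p}(\mathbb{R}^{N},\rho)$ with $\sup_{u_{0}\in B}\|u_{0}\|_{L^{p}(\mathbb{R}^{N},\rho)}\leq M$, we pick $t_{1}>0$ so large that $e^{-t_{1}}\max\{M-R,0\}<\varepsilon$; for every $t\geq t_{1}$ we obtain $\|S(t)u_{0}\|_{L^{p}(\mathbb{R}^{N},\rho)}<R+\varepsilon$, which is exactly the absorbing property. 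Nothing in the argument is genuinely hard; the only point that deserves attention is justifying that both $h$ and $f\circ u(s)$ have finite weighted $L^{p}$-norm uniformly in the state, which is where the normalization $\int\rho=1$ in (H2) is essential.
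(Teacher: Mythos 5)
Your proposal is correct and follows essentially the same route as the paper: variation of constants, the triangle inequality, Lemma \ref{lem2.1} combined with (H3) to bound the integrand by $R$, and then a choice of $t_{1}$ depending on the bound of the initial data. The only cosmetic difference is that you keep the factor $(1-e^{-t})$ explicitly while the paper bounds the integral directly by $R$; the conclusion is identical.
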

\begin{proof} Let $u(x,t)$ be the solution of (\ref{1.1}) with initial condition $u(\cdot,0)\in L^{p}(\mathbb{R}^{N},\rho)$. Then,  by the variation of constants formula,
$$
u(x,t)=e^{-t}u(x,0)+\int_{0}^{t}e^{s-t}[J*(f\circ u)(x,s)+h]ds.
$$
Hence

\begin{eqnarray*}
\|u(\cdot,t)\|_{L^{p}(\mathbb{R}^N, \rho)}&\leq& ||e^{-t}u(\cdot,0)||_{L^p(\mathbb{R}^N,\rho)}
\\
&+& \int_0^te^{s-t}\left|\left|J\ast(f\circ u)(\cdot,s)+h\right|\right|_{L^p(\mathbb{R}^N,\rho)}ds
\\
&\leq&e^{-t}||u(\cdot,0)||_{L^p(\mathbb{R}^N,\rho)}
\\
&+& \int_0^te^{s-t}[||J\ast(f\circ u)(\cdot,s)||_{L^p(\mathbb{R}^N,\rho)}+h]ds.
\end{eqnarray*}
Then, using Lemma \ref{lem2.1}, it follows that
\begin{eqnarray*}
\|u(\cdot,t)\|_{L^{p}(\mathbb{R}^N, \rho)} &\leq&  e^{-t}||u(\cdot,0)||_{L^p(\mathbb{R}^N,\rho)}
\\
&+& \int_0^te^{s-t}[K^{1/p}||J||_{L^1}||f(u(\cdot,s))||_{L^p(\mathbb{R}^N,\rho)}+h]ds
\end{eqnarray*}
Now, from (H3), we have
\begin{eqnarray*}
\|f(u(\cdot,s))\|_{L^{p}(\mathbb{R}^{N}, \rho)}^{p}&=&\int_{\mathbb{R}^{N}}|f(u(x,s))|^{p}\rho(x)dx
\\
&\leq&a^p\int_{\mathbb{R}^{N}}\rho(x)dx
\\
&=&a^{p}.
\end{eqnarray*}
Thus
\begin{eqnarray*}
\|u(\cdot,t)\|_{L^{p}(\mathbb{R}^{N}, \rho)}&\leq& e^{-t}\|u(\cdot,0)\|_{L^{p}(\mathbb{R}^{N}, \rho)} +\int_{0}^{t}e^{s-t}\left[aK^{1/p}\|J\|_{L^{1}}+h \right]ds
\\
&=&e^{-t}\|u(\cdot,0)\|_{L^{p}(\mathbb{R}^{N}, \rho)} +R.
\end{eqnarray*}

Therefore, for any $t> \ln\left(\frac{\|u(\cdot,0)\|_{L^{p}(\mathbb{R}^{N}, \rho)}}{\varepsilon}\right)$, we have $\|u(\cdot,t)\|_{L^{p}(\mathbb{R}^{N}, \rho)}<\varepsilon +R$, and the proof is complete.
\end{proof}

\begin{lemma}
Suppose that the hypotheses (H1)-(H4) hold. Then, for any $\eta>0$, there exists $t_{\eta}$ such that $S(t_{\eta})\mathcal{B}(0,R+\varepsilon)$ has a finite covering by balls of $L^{p}(\mathbb{R}^N, \rho)$ with radius smaller than $\eta$.\label{lema3.2}
\end{lemma}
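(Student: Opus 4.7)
The plan is to establish total boundedness of $S(t_\eta)\mathcal{B}(0,R+\varepsilon)$ by a variant of the standard splitting trick used in non-local problems on unbounded domains. Using the variation of constants formula I would write
\[
S(t)u_0 = e^{-t}u_0 + v(u_0,t), \qquad v(u_0,t)(x) := \int_0^t e^{s-t}\bigl[J*(f\circ u)(x,s)+h\bigr]\,ds.
\]
Choosing $t_\eta$ so that $e^{-t_\eta}(R+\varepsilon)<\eta/2$, the first summand has $L^p(\mathbb{R}^N,\rho)$-norm below $\eta/2$ uniformly over $u_0\in\mathcal{B}(0,R+\varepsilon)$. It therefore suffices to cover the set $\mathcal{V}_{t_\eta}:=\{v(u_0,t_\eta):u_0\in\mathcal{B}(0,R+\varepsilon)\}$ by finitely many balls of radius $\eta/2$.

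The next step is to establish uniform pointwise regularity of $v$. By (H3),
\[
\bigl|J*(f\circ u)(x,s)\bigr|\leq a\,\|J\|_{L^1},
\]
so $\|v(u_0,t)\|_{L^\infty(\mathbb{R}^N)}\leq a\|J\|_{L^1}+h =: M$ uniformly in $u_0$ and $t$. Differentiating under the integral in $v$ and using (H4) (which bounds $\int|\partial_x J(x-y)|\,dy$ by $S$) yields $|\partial_x v(u_0,t)(x)|\leq aS =: M'$, again uniformly. Thus $\mathcal{V}_{t_\eta}$ is a uniformly bounded, equi-Lipschitz family of real-valued functions on $\mathbb{R}^N$.

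Now I split each $v=v(u_0,t_\eta)$ as $v\chi_{B[0,l]}+v\chi_{B[0,l]^c}$. For the tail, the pointwise bound $|v|\le M$ together with $\rho\in L^1(\mathbb{R}^N)$ gives
\[
\|v\chi_{B[0,l]^c}\|_{L^p(\mathbb{R}^N,\rho)}^p\leq M^p\int_{|x|>l}\rho(x)\,dx,
\]
which can be made smaller than $(\eta/4)^p$ by choosing $l$ large, uniformly over $u_0$. For the interior piece, the pointwise bounds $M,M'$ imply a uniform estimate $\|v\|_{W^{1,p}(B[0,l])}\leq C(l,M,M')$. The Sobolev compact embedding $W^{1,p}(B[0,l])\hookrightarrow L^p(B[0,l])$ then makes the restrictions $\{v|_{B[0,l]}\}$ totally bounded in $L^p(B[0,l])$. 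Iterating (H2) shows that $\rho$ is bounded on $B[0,l]$, so the continuous inclusion $L^p(B[0,l])\hookrightarrow L^p(B[0,l],\rho)$ transfers total boundedness to the weighted space. Combining the tail and interior covers yields a finite covering of $\mathcal{V}_{t_\eta}$ by balls of radius $\eta/2$, hence a finite $\eta$-covering of $S(t_\eta)\mathcal{B}(0,R+\varepsilon)$.

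The main obstacle is obtaining a compactness-friendly regularity bound for $v(u_0,t)$ that is uniform over the absorbing ball and valid on all of $\mathbb{R}^N$ (not merely locally). This is precisely where (H3) and (H4) are used: (H3) furnishes the pointwise $L^\infty$ estimate needed both to handle the tail in the weighted space and to feed the $W^{1,p}$ bound, while (H4) moves the derivative off of $u$ and onto $J$, giving the spatial derivative estimate without requiring any regularity of the initial datum beyond $L^p$.
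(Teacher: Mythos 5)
Your proposal is correct and follows essentially the same route as the paper: the same variation-of-constants splitting into $e^{-t}u_0$ plus the integral term, the same uniform $L^\infty$ and derivative bounds from (H3) and (H4), the same cut-off at a large ball $B[0,l]$ with the compact embedding $W^{1,p}(B[0,l])\hookrightarrow L^{p}(B[0,l])$ handling the interior piece. Your tail estimate $\|v\chi_{B[0,l]^c}\|_{L^p(\rho)}^p\leq M^p\int_{|x|>l}\rho$ is in fact a cleaner and more direct version of the paper's H\"older-inequality computation, and your explicit remark that $\rho$ is bounded on $B[0,l]$ (so that total boundedness transfers to the weighted norm) fills a step the paper leaves implicit.
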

\begin{proof} From Lemma \ref{lema3.1}, it follows that $\mathcal{B}(0,R+\varepsilon)$ is invariant. Now, the solution of (\ref{1.1}) with initial condition $u_{0}\in \mathcal{B}(0,R+\varepsilon)$ is given, by the variation of constant formula, by
$$
u(x,t)=e^{-t}u_{0}+\int_{0}^{t}e^{-(t-s)}[(J*(f \circ u))(x,s)+h]ds.
$$
Write
$$
v(x,t)=e^{-t}u_{0}(x) \,\, \mbox{and} \,\,
w(x,t)=\int_{0}^{t}e^{-(t-s)}[(J*(f \circ u))(x,s)+h]ds.
$$
Let $\eta >0$ given. We may find $t(\eta)$ such that if $t \geq
t(\eta)$, then $\|v(\cdot,t)\|_{L^{p}(\mathbb{R}^{N}, \rho)} \leq
\frac{\eta}{2}$.

Now, using (H3), we obtain
\begin{eqnarray*}
||J\ast(f\circ u)(\cdot,s)||^p_{L^p(\mathbb{R}^N,\rho)} &=& \int_{\mathbb{R}^N}|J\ast(f\circ u)(x,s)|^p\rho(x)dx
\\
&=& \int_{\mathbb{R}^N}\left|\int_{\mathbb{R}^N}J(x-y)f(u(y))dy\right|^p\rho(x)dx
\\
&\leq& \int_{\mathbb{R}^N}\left(\int_{\mathbb{R}^N}J(x-y)|f(u(y))|dy\right)^p\rho(x)dx
\\
&\leq& \int_{\mathbb{R}^N}\left(a\int_{\mathbb{R}^N}J(x-y)dy\right)^p\rho(x)dx
\\
&=& \int_{\mathbb{R}^N}(a||J||_{L^1})^p\rho(x) dx
\\
&=& (a||J||_{L^1})^p\int_{\mathbb{R}^N}\rho(x)dx
\\
&=&(a||J||_{L^1})^p.
\end{eqnarray*}
Thus,
\begin{equation*}
||J\ast(f\circ u)(\cdot,s)||_{L^p(\mathbb{R}^N,\rho)} \leq a||J||_{L^1}.
\end{equation*}

Hence
\begin{eqnarray}
\|w(\cdot,t)\|_{L^{p}(\mathbb{R}^{N}, \rho)}&\leq&
\int^t_0e^{-(t-s)}(\|J\ast(f\circ
u)(\cdot,s)\|_{L^p(\mathbb{R}^N,\rho)}+\|h\|_{L^p(\mathbb{R}^N,\rho)})ds\nonumber
\\
&\leq &\int^t_0e^{-(t-s)}(a\|J\|_{L^1}+h)ds  \nonumber
\\
&=& a\|J\|_{L^{1}}+h.  \label{6.8}
\end{eqnarray}

On the other hand, by (H3), we have
\begin{eqnarray} \label{eqv8.2}
|w(x,t)| &\leq& \int_0^te^{-(t-s)}[|J\ast(f\circ u)(x,s)| +h] ds \nonumber
\\
&=&\int_0^te^{-(t-s)}\left| \int_{\mathbb{R}^N}J(x-y)f(u(y,t))dy\right|+h\ ds \nonumber
\\
&\leq&\int_0^te^{-(t-s)}\left(\int_{\mathbb{R}^N}J(x-y)|f(u(y,t))|dy+h\right) ds \nonumber
\\
&\leq& \int_0^te^{-(t-s)}\left(a\int_{\mathbb{R}^N}J(x-y)dy+h\right) ds \nonumber
\\
&=& \int_0^te^{-(t-s)}\left(a||J||_{L^1}+h\right) ds \nonumber
\\
&=& a||J||_{L^1} + h.
\end{eqnarray}

Furthermore, differentiating with respect to $x_{i}$, for $t\geq 0$,
we have
$$
\frac{\partial w}{\partial
x_{i}}(x,t)=\int_{0}^{t}e^{-(t-s)}\partial_{x_{i}} J*(f\circ
u)(x,s)ds, \,\, i=1,\cdots , N.
$$
Thus
\begin{eqnarray*}
\left|\frac{\partial w }{\partial x_{i}}(x, t) \right|&\leq& \int_{0}^{t}e^{-(t-s)}|\partial_{x_{i}}J*(f\circ u)(x, s)|ds.
\end{eqnarray*}
But, using (H4), obtain
\begin{eqnarray*}
|\partial_{x_{i}}J*(f\circ u)(x, s)| &\leq& \int_{\mathbb{R}^N} a\left|\partial_{x_i}J(x-y)\right|dy
\\
&\leq& aS <\infty,
\end{eqnarray*}
it follows that
\begin{equation}
\left|\frac{\partial w }{\partial x_{i}}(x, t) \right|\leq \int_0^te^{-(t-s)}aS\ ds \leq aS <\infty. \label{6.9}
\end{equation}

Now, let $l>0$ be chosen such that
\begin{equation}
(a\|J\|_{L^{1}}+h)\left(\int_{\mathbb{R}^{N}}(1-\chi_{B[0,l]})^{p^2/(p-1)}(x)\rho(x)dx\right)^{(p-1)/p^{2}}
\leq \frac{\eta}{4},\label{6.11}
\end{equation}
where $\chi_{B[0,l]}$ denotes the characteristic function of the ball $B[0,l]$. Then, using (\ref{6.8}), (\ref{eqv8.2}) and (\ref{6.11}), we obtain

\begin{eqnarray*}
\|(1-\chi_{B[0,l]})(\cdot)w(\cdot,t)\|_{L^{p}(\mathbb{R}^{N}, \rho)}^{p}&=& \int_{\mathbb{R}^N}|(1-\chi_{B[0,l]})(x)w(x,t)|^p\rho(x)dx
\\
&=&\int_{\mathbb{R}^N}|(1-\chi_{B[0,l]})(x)|^p|w(x,t)|^p\rho(x)dx.
\end{eqnarray*}
Using (\ref{conjugado}) and Holder's inequality, follows that
\begin{eqnarray*}
\lefteqn{\|(1-\chi_{B[0,l]})(\cdot)w(\cdot,t)\|_{L^{p}(\mathbb{R}^{N}, \rho)}^{p}=}
\\
&=&\int_{\mathbb{R}^N}|w(x,t)|\rho(x)^{1/p}|(1-\chi_{B[0,l]})(x)|^p|w(x,t)|^{p-1}\rho(x)^{(p-1)/p}dx
\\
&\leq& \left(\int_{\mathbb{R}^N}|w(x,t)|^p\rho(x)dx\right)^{1/p}\left(\int_{\mathbb{R}^N}|(1-\chi_{B[0,l]}(x)|^{p^2/(p-1)}|w(x,t)|^p\rho(x)dx\right)^{(p-1)/p}
\\
&=&||w(\cdot,t)||_{L^p(\mathbb{R}^N,\rho)}\left(\int_{\mathbb{R}^N}|(1-\chi_{B[0,l]})(x)|^{p^2/(p-1)}|w(x,t)|^p\rho(x)dx\right)^{(p-1)/p}
\\
&\leq& (a||J||_{L^1}+h)\left(\int_{\mathbb{R}^N}|(1-\chi_{B[0,l]})(x)|^{p^2/(p-1)}(a||J||_{L^1}+h)^p\rho(x)dx\right)^{(p-1)/p}
\\
&=&
(a||J||_{L^1}+h)^{p}\left(\int_{\mathbb{R}^N}|(1-\chi_{B[0,l]})(x)|^{p^2/(p-1)}\rho(x)dx\right)^{(p-1)/p}
\\
&<& \frac{\eta}{4}.
\end{eqnarray*}

Also, by (\ref{eqv8.2}) and (\ref{6.9}), the restriction of
$w(\cdot,t)$ to the ball $B[0,l]$ is bounded in $W^{1,p}(B[0,l])$
(by a constant independent of $u_{0}\in
\mathcal{B}(0,R+\varepsilon)$ and of $t$), and therefore the set
$\{\chi_{B[0,l]}w(\cdot,t)\}$ with $w(\cdot,0)\in
\mathcal{B}(0,R+\varepsilon)$ is relatively compact subset of
$L^{p}(\mathbb{R}^{N}, \rho)$ for any $t>0$ and, hence, it can be
covered by a finite number of balls with radius smaller than
$\frac{\eta}{4}$.

Therefore, since
$$
u(\cdot,t)=v(\cdot,t)+\chi_{B[0,l]}w(\cdot,t)+(1-\chi_{B[0,l]})w(\cdot,t),
$$
it follows that $S(t_{\eta})\mathcal{B}(0,R+\varepsilon)$ has a finite covering by balls of $L^{p}(\mathbb{R}^{N},\rho)$ with radius smaller than $\eta$, 
and the result is proved.

\end{proof}

 \vspace{1mm}

 Denoting by $\omega(C)$ the $\omega$-limit of a set $C$, we obtain the result
 below, whose proof is omitted because it is very similar to  Theorem 3.3 in
 \cite{Silva2}.

\begin{theorem}
Assume the same hypotheses of  Lemma \ref{lema3.2}. Then $\mathcal{A}=\omega(\mathcal{B}(0,R+\varepsilon))$, is a global attractor for the flow $S(t)$ generated by (\ref{1.1}) in $L^{p}(\mathbb{R}^{N}, \rho)$ which is contained in the ball of radius $R$.\label{teorema3.3}
\end{theorem}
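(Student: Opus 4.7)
The plan is to apply the classical theorem on existence of global attractors (as stated in \cite{Hale,Teman}): if $S(t)$ is a continuous semigroup on a complete metric space that admits a bounded absorbing set $\mathcal{B}$ and is asymptotically compact on $\mathcal{B}$, then $\omega(\mathcal{B})$ is the global attractor. Essentially all the analytic work has been done in the two preceding lemmas; what remains is to fit them into this abstract framework and then to argue the size bound.

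First I would observe that $S(t)$ is a $C^0$-semigroup on $L^p(\mathbb{R}^N,\rho)$: Proposition \ref{prop5.2} together with the standard ODE theory in Banach spaces invoked in Remark \ref{rmk2.5} yields joint continuity of $(t,u_0)\mapsto S(t)u_0$. Next, Lemma \ref{lema3.1} provides the bounded absorbing set $\mathcal{B}(0,R+\varepsilon)$. For asymptotic compactness, I would use Lemma \ref{lema3.2}: since for every $\eta>0$ the set $S(t_\eta)\mathcal{B}(0,R+\varepsilon)$ admits a finite covering by $L^p(\mathbb{R}^N,\rho)$-balls of radius less than $\eta$, its Kuratowski measure of non-compactness is at most $2\eta$. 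As $\eta>0$ is arbitrary, together with the absorbing property this implies that $S(t)$ is asymptotically compact on $\mathcal{B}(0,R+\varepsilon)$ in the sense required by the abstract theorem.

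Having verified the hypotheses, the abstract result yields that $\mathcal{A}:=\omega(\mathcal{B}(0,R+\varepsilon))$ is a non-empty, compact, invariant set that attracts every bounded subset of $L^p(\mathbb{R}^N,\rho)$, i.e.\ the global attractor. For the size bound I would argue as follows: by Lemma \ref{lema3.1}, for every $\varepsilon'>0$ the ball $\mathcal{B}(0,R+\varepsilon')$ is absorbing, so by invariance of $\mathcal{A}$ and the attracting property we must have $\mathcal{A}\subset\overline{\mathcal{B}(0,R+\varepsilon')}$; intersecting over $\varepsilon'>0$ gives $\mathcal{A}\subset\overline{\mathcal{B}(0,R)}$. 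Equivalently, any $u\in\mathcal{A}$ is a limit $u=\lim_n S(t_n)u_n$ with $t_n\to\infty$ and $u_n\in\mathcal{B}(0,R+\varepsilon)$, and Lemma \ref{lema3.1} forces $\|S(t_n)u_n\|_{L^p(\mathbb{R}^N,\rho)}\le R+\varepsilon'$ for every $\varepsilon'>0$ once $n$ is large.

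I do not anticipate a genuine obstacle, since the work is truly concentrated in Lemmas \ref{lema3.1} and \ref{lema3.2}; the only delicate point is the passage $\mathcal{A}\subset\overline{\mathcal{B}(0,R)}$, which requires taking the intersection over $\varepsilon'>0$ rather than applying the absorbing property for a single $\varepsilon'$. This is presumably why the author refers to the analogous argument in \cite{Silva2} and omits the proof.
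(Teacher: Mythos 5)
Your proposal is correct and follows exactly the route the paper intends: the proof is omitted there precisely because it is the standard Hale--Temam argument (continuous semigroup plus the absorbing ball of Lemma \ref{lema3.1} plus the asymptotic compactness extracted from the finite coverings of Lemma \ref{lema3.2}), as in Theorem 3.3 of \cite{Silva2}. Your handling of the radius bound via the intersection over $\varepsilon'>0$ is also the right way to get $\mathcal{A}\subset \mathcal{B}[0,R]$.
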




\section{Boundedness results}

In this section we prove uniform estimates for the attractor whose
existence was given in the Theorem \ref{teorema3.3}.

\begin{theorem} \label{thm6.5}
Assume the same hypotheses of  Lemma \ref{lema3.2}. Then the attractor $\mathcal{A}$ belongs to the ball $\|\cdot\|_{L^{\infty}(\mathbb{R}^{N})}\leq r$, where $r=a\|J\|_{L^{1}}+h$.
\end{theorem}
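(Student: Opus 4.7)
The plan is to combine the invariance of $\mathcal{A}$ under the flow $S(t)$ with the pointwise boundedness of $J*(f\circ u)$ coming from hypothesis (H3). The crucial point is that, while the boundedness we have been carrying so far is in the weighted $L^{p}$-norm, the assumption $|f|\leq a$ actually gives us a \emph{uniform in $x$} bound on the non-local term, which is strong enough to reach an $L^{\infty}$ estimate on the attractor.

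First I would fix $u_{0}\in\mathcal{A}$ and use invariance: for every $n\in\mathbb{N}$ there exists $u_{-n}\in\mathcal{A}$ with $S(n)u_{-n}=u_{0}$. Setting $u_{n}(\cdot,s)=S(s)u_{-n}$ for $s\in[0,n]$ and applying the variation of constants formula along this backward piece of orbit, I get the representation
\[
u_{0}(x)=e^{-n}u_{-n}(x)+\int_{0}^{n}e^{-(n-s)}\bigl[(J*(f\circ u_{n}))(x,s)+h\bigr]\,ds,
\]
which is legitimate pointwise in $x$ because $J$ is continuous with compact support and $f\circ u_{n}$ is bounded (hypothesis (H3)), so the convolution is a bounded continuous function of $x$.

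The next step is to bound the integrand uniformly in $(x,s)$. Since $f\geq 0$, $|f|\leq a$ and $J\geq 0$, one has $0\leq (J*(f\circ u_{n}))(x,s)\leq a\|J\|_{L^{1}}$ for every $x$ and $s$, and consequently
\[
|u_{0}(x)|\leq e^{-n}|u_{-n}(x)|+(1-e^{-n})\bigl(a\|J\|_{L^{1}}+h\bigr)=e^{-n}|u_{-n}(x)|+(1-e^{-n})r.
\]

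Finally I would let $n\to\infty$. By Theorem \ref{teorema3.3} the attractor lies in the ball of radius $R$ of $L^{p}(\mathbb{R}^{N},\rho)$, so $\|e^{-n}u_{-n}\|_{L^{p}(\mathbb{R}^{N},\rho)}\leq e^{-n}R\to 0$, and extracting a subsequence along which this $L^{p}$-convergence is realized pointwise almost everywhere, I conclude $|u_{0}(x)|\leq r$ for a.e.\ $x\in\mathbb{R}^{N}$, i.e.\ $\|u_{0}\|_{L^{\infty}(\mathbb{R}^{N})}\leq r$. The only delicate part of the argument is justifying the pointwise interpretation of the variation of constants formula and the a.e.\ subsequence extraction; once these are in place, the $L^{\infty}$ bound is essentially a one-line consequence of (H3).
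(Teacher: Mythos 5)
Your proposal is correct and follows essentially the same route as the paper: represent a point of $\mathcal{A}$ via the backward-in-time variation of constants formula (using invariance of the attractor), bound $|J*(f\circ u)|\leq a\|J\|_{L^{1}}$ pointwise via (H3), and let the backward time tend to infinity so that only $\int_{-\infty}^{t}e^{-(t-s)}(a\|J\|_{L^{1}}+h)\,ds=r$ survives. Your treatment of the limit (the $L^{p}(\mathbb{R}^{N},\rho)$ decay of $e^{-n}u_{-n}$ plus an a.e.\ convergent subsequence) is in fact more careful than the paper's, which simply writes the representation as an integral over $(-\infty,t]$.
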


\begin{proof}
  Let $u(x,t)$ be a solution of \eqref{1.1} in $\mathcal{A}$. Then, as we see in \eqref{emL2a},
$$
u(x,t)=\int_{-\infty}^{t}e^{-(t-s)}[J*(f\circ u)(x,s)+h]ds,
$$
where the equality above is in the sense of $L^{p}(\mathbb{R}^{N}, \rho)$. Thus, using (H3), obtain
\begin{align*}
|u(x,t)|
&\leq  \int_{-\infty}^{t}e^{-(t-s)}[|J*(f\circ u)(x,s)|+h]ds\\
&\leq \int_{-\infty}^{t}(a\|J\|_{L^{1}}+h)e^{-(t-s)}ds\\
&= \int_{-\infty}^{t}r e^{-(t-s)}ds = r.
\end{align*}
\end{proof}

%
%
%

Proceeding as in \cite{Silva2}, replacing
$\|\cdot\|_{L^{2}(\mathbb{R}, \rho)}$ by
$\|\cdot\|_{L^{p}(\mathbb{R}^{N}, \rho)}$, we obtain the following
result.

\begin{theorem} \label{thm7.2}
Assume the same hypotheses as in Lemma \ref{lema3.2}. Then, fixed $J_{0}$, for $J$ close to $J_{0}$ , the family of attractors $\{\mathcal{A}_{J}\}$ satisfies:
$$
\cup_{J}\mathcal{A}_{J}\subset \mathcal{B}[0,R],
$$
and furthermore, it is upper semicontinuous with respect to $J$ at $J_{0}$, that is
$$
\sup_{x\in \mathcal{A}_{J}} \inf_{y\in \mathcal{A}_{J_{0}}}\|x-y\|_{L^{p}(\mathbb{R}^{N}, \rho)}\to 0, \quad\text{as }J\to J_{0}.
$$
\end{theorem}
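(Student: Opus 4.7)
The plan is to follow the standard upper semicontinuity scheme for attractors (cf.\ \cite{Hale}), adapted to the weighted $L^p$ setting through Lemma \ref{lem2.1}. The argument rests on three ingredients: a uniform absorbing ball independent of $J$, continuity of the flow with respect to $J$ on bounded sets over compact time intervals, and the attracting property of $\mathcal{A}_{J_0}$. For the first, Theorem \ref{teorema3.3} places each $\mathcal{A}_J$ inside $\mathcal{B}[0,R_J]$ with $R_J = aK^{1/p}\|J\|_{L^{1}}+h$; since $R_J$ depends continuously on $\|J\|_{L^{1}}$, for $J$ in a sufficiently small $L^{1}$-neighbourhood of $J_0$ one has $R_J \leq R$ for some fixed $R$ slightly larger than $R_{J_0}$, which yields $\bigcup_J \mathcal{A}_J \subset \mathcal{B}[0,R]$.

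For the continuity of the flow, write $u_J(t) = S_J(t)u_0$ and use the variation of constants formula to obtain
\begin{equation*}
u_J(t) - u_{J_0}(t) = \int_0^t e^{-(t-s)}\bigl[ J*(f\circ u_J - f\circ u_{J_0})(s) + (J-J_0)*(f\circ u_{J_0})(s) \bigr]\,ds.
\end{equation*}
Lemma \ref{lem2.1} bounds each convolution in the weighted $L^p$ norm; hypothesis (H1) dominates $f\circ u_J - f\circ u_{J_0}$ pointwise by $k_1 |u_J - u_{J_0}|$, while (H3) together with $\int_{\mathbb{R}^N}\rho\,dx = 1$ gives $\|f\circ u_{J_0}\|_{L^p(\mathbb{R}^N,\rho)} \leq a$. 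Gronwall's inequality on $[0,T]$ then produces a bound of the form
\begin{equation*}
\sup_{t \in [0,T]} \|u_J(t) - u_{J_0}(t)\|_{L^p(\mathbb{R}^N,\rho)} \leq C(T)\,\|J - J_0\|_{L^{1}},
\end{equation*}
uniformly for $u_0 \in \mathcal{B}[0,R]$.

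Combining the two ingredients is standard. Given $\varepsilon > 0$, attraction of $\mathcal{B}[0,R]$ by $\mathcal{A}_{J_0}$ produces $T$ such that $S_{J_0}(T)\mathcal{B}[0,R]$ lies in the $\varepsilon/2$-neighbourhood of $\mathcal{A}_{J_0}$; the continuity estimate then forces $S_J(T)\mathcal{B}[0,R]$ into the $\varepsilon$-neighbourhood of $\mathcal{A}_{J_0}$ as soon as $\|J-J_0\|_{L^{1}}$ is small enough, and the invariance $\mathcal{A}_J = S_J(T)\mathcal{A}_J$ together with $\mathcal{A}_J \subset \mathcal{B}[0,R]$ gives the desired inclusion $\mathcal{A}_J \subset N_\varepsilon(\mathcal{A}_{J_0})$, which is precisely the upper semicontinuity statement. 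The one slightly delicate point, and the main obstacle, is the continuity estimate of the second step: one must control the two convolutions in the weighted norm simultaneously and keep both the factor $\|J-J_0\|_{L^{1}}$ and the exponential growth produced by the Lipschitz constant $k_1$ cleanly tracked through Gronwall. Everything else is bookkeeping and mirrors, mutatis mutandis, the $N=1$, $p=2$ argument of \cite{Silva2}.
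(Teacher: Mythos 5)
Your proposal is correct and follows exactly the route the paper intends: the paper gives no proof of Theorem \ref{thm7.2}, deferring entirely to the $N=1$, $p=2$ argument of \cite{Silva2} with $\|\cdot\|_{L^{2}(\mathbb{R},\rho)}$ replaced by $\|\cdot\|_{L^{p}(\mathbb{R}^{N},\rho)}$, and your three ingredients (uniform bound $R_J=aK^{1/p}\|J\|_{L^1}+h$ from Theorem \ref{teorema3.3}, the splitting $J*(f\circ u_J-f\circ u_{J_0})+(J-J_0)*(f\circ u_{J_0})$ controlled by Lemma \ref{lem2.1}, (H1), (H3) and Gronwall, and the standard attraction-plus-invariance step) are precisely that argument. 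No discrepancy to report.
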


\section{Existence of energy functional} \label{functional}

In this section, we exhibit a energy functional for the flow of (\ref{1.1}), which decreases along of solutions (\ref{1.1}). For this, beyond hypotheses (H1)-(H4), we assume the following additional hypothesis on $f$:

\begin{itemize}
\item[(H5)] the nondecreassing function $f$ takes values between $0$ and $a$ and satisfying, for $0\leq s\leq a$
\begin{equation}
\left|\int_{0}^{s}f^{-1}(r)dr\right|< L<\infty.\label{H5}
\end{equation}

\item[ (H6) ] $f$ satisfies
$$
\int_{\mathbb{R}^{N}}|f(u(x))-f(u_{0})|dx<\infty.
$$
\end{itemize}

\begin{remark}
The hypothesis (H6) always occurs, for example, in fields with finite excited region, which tend to resting state, when $|x|\to \infty.$
\end{remark}

Motivated by energy functionals from \cite{French}, \cite{Giese}, \cite{Kubota} and \cite{Silva4}, we define 
$F:L^{p}(\mathbb{R}^{N}, \rho) \rightarrow \mathbb{R}$  by
\begin{equation}
F(u)=\int_{\mathbb{R}^{N}}\left[-\frac{1}{2}f(u(x))\int_{\mathbb{R}^{N}}J(x-y)f(u(y))dy+\int_{0}^{f(u(x))}f^{-1}(r)dr
-h f(u(x))\right]dx.\label{L1}
\end{equation}

\begin{remark}
The similar functional given in \cite{Silva4} is well defined in
whole phase space. Unfortunately this does not occur here, because
the functional given in (\ref{L1}) can take values $\pm \infty$. An
example where this occurs is when whole field is at homogeneous
resting state with constant membrane potential $u_{0}$. In this
case, the external stimulus applied, $h$, satisfies
$h=u_{0}-\|J\|_{L^{1}}f(u_{0})$. \label{fields homogeneous}
\end{remark}

Let $u_{0}$ be a equilibrium solution for (\ref{1.1}), which it is
given implicitly by equation
$$
u_{0}=\|J\|_{L^{1}}f(u_{0})+h.
$$
Write $U=u-u_{0}$ and $g(U)=f(U+u_{0})-f(u_{0})$. Then the equation
(\ref{1.1}) can be write as
\begin{equation}
\frac{\partial U}{\partial t}(x,t)=-U(x,t)+ J*(g\circ U)(x,t). \label{hat}
\end{equation}
For equation (\ref{hat}), we define the functional
\begin{equation}
G(U)=\int_{\mathbb{R}^{N}}\left[-\frac{1}{2}g(U(x))\int_{\mathbb{R}^{N}}J(x-y)g(U(y))dy
+\int_{0}^{g(U(x))}g^{-1}(r)dr\right]dx.\label{Lhat}
\end{equation}
Thus we obtain the following result:

\begin{theorem} \label{LyapunovBouded} Let $U(\cdot,t)$ be
a solution of (\ref{hat}). Then, under the hypotheses, (H3), (H5)
and (H6), we have
\begin{multline}
G(U)=\int_{\mathbb{R}^{N}}\left[-\frac{1}{2}[f(u(x))-f(u_{0})]\int_{\mathbb{R}^{N}}J(x-y)
[f(u(y))-f(u_{0})]dy \right.
\\ \left. +\int_{f(u_{0})}^{f(u(x))}f^{-1}(r)dr\right]dx
<\infty
\end{multline}
and
\begin{equation}
 \frac{d
}{dt}G(U(x,t))=-\int_{\mathbb{R}^{N}}f'(u(x,t))\left(\frac{\partial
u}{\partial t}(x,t)\right)^{2}dx\leq 0.
\end{equation}
\end{theorem}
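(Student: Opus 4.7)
The plan is to prove the two assertions of the theorem separately: first, the explicit rewriting of $G(U)$ in terms of $f$ and $u$ together with its finiteness; and second, the dissipation identity for $\frac{d}{dt}G(U)$.

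For the rewriting, I would invert $g$ explicitly. Since $g(U) = f(U+u_0) - f(u_0)$, one has $g^{-1}(r) = f^{-1}(r + f(u_0)) - u_0$. The substitution $s = r + f(u_0)$ transforms $\int_0^{g(U(x))} g^{-1}(r)\,dr$ into $\int_{f(u_0)}^{f(u(x))} f^{-1}(s)\,ds$ plus a term linear in $f(u(x)) - f(u_0)$. Inserting this, together with the identity $g(U) = f(u) - f(u_0)$, into (\ref{Lhat}) produces the right-hand side of the displayed equation. For finiteness, the quadratic piece is controlled, via Fubini and (H3), by
\[
\int_{\mathbb{R}^N}|f(u(x)) - f(u_0)|\int_{\mathbb{R}^N} J(x-y)|f(u(y)) - f(u_0)|\,dy\,dx \le 2a\,\|J\|_{L^1}\int_{\mathbb{R}^N}|f(u(y)) - f(u_0)|\,dy,
\]
which is finite by (H6). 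The scalar integral $\int_{f(u_0)}^{f(u(x))} f^{-1}(s)\,ds$ is uniformly bounded by (H5); combining a local Lipschitz-type bound $|\int_{f(u_0)}^{f(u(x))} f^{-1}\,ds| \le C|f(u(x)) - f(u_0)|$ on compact subintervals of $(0,a)$ with the $L^1$ control from (H6) yields $\mathbb{R}^N$-integrability.

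For the dissipation identity, I would differentiate $G(U(\cdot,t))$ under the integral sign. Writing the quadratic part as $-\tfrac12\int\!\!\int J(x-y)g(U(x))g(U(y))\,dy\,dx$, the symmetry $J(x-y) = J(y-x)$ and Fubini give its time derivative as $-\int g'(U(x))\,U_t(x)\,(J*(g\circ U))(x)\,dx$. The integrand $\int_0^{g(U(x))} g^{-1}(r)\,dr$ has time derivative $g^{-1}(g(U(x)))\,g'(U(x))\,U_t(x) = U(x)\,g'(U(x))\,U_t(x)$ by the fundamental theorem of calculus and the chain rule. Summing produces
\[
\frac{d}{dt}G(U) = \int g'(U(x))\,U_t(x)\bigl[U(x) - (J*(g\circ U))(x)\bigr]\,dx,
\]
and equation (\ref{hat}) yields $U(x) - (J*(g\circ U))(x) = -U_t(x)$. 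Substituting, and using $g'(U) = f'(U+u_0) = f'(u)$ and $U_t = u_t$, gives $\frac{d}{dt}G(U) = -\int f'(u)\,u_t^2\,dx$; non-positivity then follows from $f$ being nondecreasing, so that $f' \ge 0$ almost everywhere.

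The main obstacle is justifying both the exchange of differentiation with the $\mathbb{R}^N$-integral and the absolute integrability of each component of the rewritten $G(U)$. For the former, the pointwise bounds on $u$ and $u_t$ inherited from (H3) and equation (\ref{1.1}) (compare the estimates in Lemma \ref{lema3.2}) supply the needed domination. For the latter, care is required when $f^{-1}$ fails to be bounded near the endpoints $0$ and $a$ of the range of $f$; hypothesis (H5) guarantees global finiteness of the one-dimensional integral of $f^{-1}$, which combined with (H6) yields integrability on $\mathbb{R}^N$.
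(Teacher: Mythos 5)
Your treatment of the dissipation identity is a correct, self-contained version of what the paper only sketches: the paper invokes Theorem 4.4 of \cite{Silva4} for the formula $\frac{d}{dt}G(U)=-\int g'(U)U_t^2\,dx$ and then rewrites $g'$ in terms of $f'$, whereas you actually carry out the differentiation under the integral using the symmetry of $J$ with Fubini, the fundamental theorem of calculus, and equation (\ref{hat}); that computation is right. Your finiteness argument (Fubini plus (H3) and (H6) for the quadratic term; the uniform bound from (H5) combined with the $L^1$ control from (H6) for the scalar term) is likewise more detailed than the paper, which merely asserts that (H5) and (H6) give $|G(U)|<\infty$.

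The genuine problem is in your first step, and it is an internal inconsistency that in fact exposes an error in the paper's own argument. You correctly compute $g^{-1}(r)=f^{-1}(r+f(u_0))-u_0$ and observe that the substitution $s=r+f(u_0)$ yields $\int_{f(u_0)}^{f(u(x))}f^{-1}(s)\,ds$ \emph{plus} a term linear in $f(u(x))-f(u_0)$; explicitly, $\int_0^{g(U(x))}g^{-1}(r)\,dr=\int_{f(u_0)}^{f(u(x))}f^{-1}(s)\,ds-u_0\,[f(u(x))-f(u_0)]$. You then assert that inserting this into (\ref{Lhat}) ``produces the right-hand side of the displayed equation,'' but it does not: it produces that right-hand side plus the extra summand $-u_0\int_{\mathbb{R}^N}[f(u(x))-f(u_0)]\,dx$, which does not vanish since $u_0=\|J\|_{L^1}f(u_0)+h\geq h>0$. (Testing with $f=\mathrm{id}$, $u_0=1$ shows the two one-dimensional integrals are genuinely unequal.) The paper hides the same gap behind the claim that $f^{-1}$ and $g^{-1}$ ``differ only by translation, which is an isometry''; this is false, because $g^{-1}$ differs from a translate of $f^{-1}$ by the additive constant $-u_0$, and that constant contributes $-u_0$ times the length of the interval of integration. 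The correct conclusion of your substitution is that the displayed identity for $G(U)$ must carry the additional term $-u_0\int_{\mathbb{R}^N}[f(u(x))-f(u_0)]\,dx$; this term is finite by (H6), so the finiteness assertion and the dissipation identity (which you compute directly from (\ref{Lhat}) and which never uses the rewriting) survive unchanged, but you should state the corrected identity rather than force agreement with the formula as printed.
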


\begin{proof}
Since $g(U)=f(U+u_{0})-f(u_{0})$, from equation (\ref{Lhat}), we
obtain
\begin{multline*}
G(U)=\int_{\mathbb{R}^{N}}\left[-\frac{1}{2}[f(U(x)+u_{0})-f(u_{0})]
\int_{\mathbb{R}^{N}}J(x-y) [f(U(y)+u_{0})-f(u_{0})]dy \right.
\\
\left. +\int_{0}^{g(U(x))}g^{-1}(r)dr\right]dx.
\end{multline*}
Now, using that $U=u-u_{0}$, $g(0)=0$ and the fact that $f^{-1}$ and
$g^{-1}$ differ only by translation, which is an isometry, follows
that
\begin{eqnarray*}
\int_{f(u_{0})}^{f(u(x))}f^{-1}(r)dr=\int_{0}^{f(U(x)+u_{0})-f(u_{0})}f^{-1}(r)dr=\int_{0}^{g(U(x))}g^{-1}(r)dr.
\end{eqnarray*}
Hence
\begin{multline*}
G(U)=\int_{\mathbb{R}^{N}}\left[-\frac{1}{2}[f(u(x))-f(u_{0})]
\int_{\mathbb{R}^{N}}J(x-y) [f(u(y))-f(u_{0})]dy\right.
\\
\left. +\int_{f(u_{0})}^{f(u(x))}f^{-1}(r)dr\right]dx.
\end{multline*}
From hypotheses (H5) and (H6), it follows that $|G(U)|<\infty$.

Furthermore, proceeding as in the Theorem 4.4 of \cite{Silva4}, it
is easy to verify that
\begin{equation*}
\frac{d
}{dt}G(U(x,t))=-\int_{\mathbb{R}^{N}}g'(U(x,t))\left(\frac{\partial
U}{\partial t}(x,t)\right)^{2}dx.
 \label{Lyapunovhat}
\end{equation*}
Hence
\begin{eqnarray*}
 \frac{d }{dt}G(U(x,t))&=&-\int_{\mathbb{R}^{N}}g'(U(x,t))\left(\frac{\partial
U}{\partial t}(x,t)\right)^{2}dx\\
&=&-\int_{\mathbb{R}^{N}}[f'(U(x,t)+u_{0})-\frac{d}{dt}(f(u_{0}))]\left(\frac{\partial
u}{\partial t}(x,t) - \frac{\partial u_{0}}{\partial t}\right)^{2}dx\\
&=&-\int_{\mathbb{R}^{N}}[f'(u(x,t))]\left(\frac{\partial
u}{\partial t}(x,t) \right)^{2}dx.
\end{eqnarray*}
From hypothesis (H3) the result follows.
\end{proof}

\begin{remark}
From Theorem \ref{LyapunovBouded}, it follows that the functional
given in (\ref{Lhat}) is actually a Lyapunov functional for the flow
generated by equation (\ref{hat}).
\end{remark}

\section{Concluding Remarks}

In this paper we extend results on global dynamical of the neural
fields equation considering fields in $x\in \mathbb{R}^{N}$ and more
abstracts phase spaces. Although realistically, $N$ should be equal
to $1,2$ or $3$, we do not restrict the calculations to this case,
because all estimates also are valid with $N>3$. Furthermore,
motivated by energy functional existing in the literature, we
exhibit one functional energy (type Lyapunov functional), which is
well defined throughout phase space, which it is a lot important for
studying existence and stability of solutions of equilibria of
equations neural fields.

\subsection*{Acknowledgments}
The authors would like to thank the professors Antonio L. Pereira
(USP), and Flank D. M. Bezerra (UFPB) for their suggestions for this
work.

\end{document}